\newtheorem*{rep@theorem}{\rep@title}
\newcommand{\newreptheorem}[2]{%
\newenvironment{rep#1}[1]{%
 \def\rep@title{#2 \ref{##1}}%
 \begin{rep@theorem}}%
 {\end{rep@theorem}}}
\theoremstyle{definition}
\newtheorem{defn}{Definition}[section]
\newtheorem{ex}{Example}[section]
\newtheorem{alg}{Algorithm}[section]
\newtheorem{prob}{Problem}[section]
\theoremstyle{plain}
\newtheorem{thm}{Theorem}[section]
\newtheorem{conj}[thm]{Conjecture}
\newtheorem{lem}[thm]{Lemma}
\newtheorem{prop}[thm]{Proposition}
\theoremstyle{remark}
\newtheorem{rem}{Remark}[section]
\newcommand{\vocab}[1]{\textit{#1}}
\numberwithin{equation}{section}
\newcommand{\f}{\frac}
\newcommand{\lt}{\left}
\newcommand{\rt}{\right}
\newcommand{\fS}{{\mathfrak{S}}}
\DeclareMathOperator{\rred}{Rred}
\DeclareMathOperator{\red}{Red}
\DeclareMathOperator{\eq}{Eq}
\begin{document}

\title[$(132,213)$-avoiding cyclic permutations]{An upper bound on the number of $(132,213)$-avoiding cyclic permutations}

\author{Brice Huang}
\address{Department of Mathematics, MIT, Cambridge, MA, USA.}
\email{bmhuang@mit.edu}

\subjclass[2010]{05A05, 05A20}
\keywords{Pattern avoidance, cyclic permutation.}

\date{August 25, 2018}

\begin{abstract}
    We show a $n^2 \cdot 2^{n/2}$ upper bound
    on the number of $(132,213)$ avoiding cyclic permutations.
    This is the first nontrivial upper bound on the number of such permutations.
    We also construct an algorithm to determine
    whether a $(132,213)$ avoiding permutation is cyclic
    that references only the permutation's layer lengths.
\end{abstract}

\maketitle

\section{Introduction}\label{sec:intro}

The theory of \vocab{pattern avoidance} in permutations
has been widely studied since its introduction by Knuth in 1968 \cite{Knu}.
The classical form of this problem asks to count the number of permutations
of $[n] = \{1,\ldots,n\}$ avoiding a given pattern $\sigma$.
Since then, many variations of this problem have been proposed and studied
in the literature.

We will focus on the problem of pattern avoidance among permutations
consisting of \vocab{a single cycle}.
This problem was first posed by Stanley in 2007
at the Permutation Patterns Conference,
and was subsequently studied by
Archer and Elizalde \cite{AE} and B\'ona and Cory \cite{BC}.

Let us first recall the definition of pattern avoidance.
Let $\fS_n$ denote the set of permutations of $[n]$.
\begin{defn}
    Let $\sigma \in \fS_k$.
    A permutation $\pi\in \fS_n$ \vocab{contains} $\sigma$ if
    there exist indices $1\le i_1< \ldots < i_k\le n$ such that the sequence
    \[
        \pi_{i_1}\pi_{i_2} \cdots \pi_{i_k}
    \]
    is in the same relative order as $\sigma_1\sigma_2\cdots \sigma_k$.
    Otherwise, $\pi$ \vocab{avoids} $\sigma$.
\end{defn}
In either case, $\sigma$ is the \vocab{pattern} that $\pi$ contains or avoids.
\begin{ex}
    The permutation $12534$ contains the pattern $132$
    and avoids the pattern $321$.
\end{ex}

In classical pattern avoidance,
the central objects of study are the numbers $A_n(\sigma)$,
defined as follows.
\begin{defn}
    Let $\sigma\in \fS_k$ be a pattern.
    Then, $A_n(\sigma)$ is the number of permutations $\pi \in \fS_n$
    avoiding the pattern $\sigma$.
    Similarly, for $(\sigma_1,\sigma_2)\in \fS_{k_1}\times \fS_{k_2} $, $A_n(\sigma_1,\sigma_2)$
    is the number of permutations $\pi \in \fS_n$ avoiding both $\sigma_1$ and $\sigma_2$.
\end{defn}

In a classical result, Knuth \cite{Knu} showed that
$A_n(\sigma) = \f{1}{n+1} \binom{2n}{n}$, the $n^{\text{th}}$ Catalan number,
for any pattern $\sigma \in \fS_3$.
In 1985, Simion and Schmidt \cite{SiSc} proved analogous results for
permutations avoiding two patterns, computing the value of $A_n(\sigma_1,\sigma_2)$
for any pair of distinct patterns $(\sigma_1,\sigma_2)\in \fS_3\times \fS_3$.
For an overview of related results in classical pattern avoidance,
see the book by Linton, Ru\v{s}cuk, and Vatter \cite{LRV}.

In classical pattern avoidance, we think of permutations only as linear orders.
We can also think of permutations algebraically,
in terms of their cycle decompositions.
For example, the permutation whose one-line notation is $24513$
has cycle decomposition $(124)(35)$.
With this perspective, we can discuss pattern avoidance among permutations
whose cycle decompositions consist of a single cycle.

\begin{defn}
    Let $\sigma \in \fS_k$ be a pattern.
    Then, $C_n(\sigma)$ is the number of permutations $\pi\in \fS_n$
    avoiding $\sigma$ that consist of a single $n$-cycle.
    Similarly, for $(\sigma_1,\sigma_2)\in \fS_{k_1}\times \fS_{k_2}$, $C_n(\sigma_1,\sigma_2)$
    is the number of permutations $\pi \in \fS_n$ avoiding both $\sigma_1$ and $\sigma_2$
    that consist of a single $n$-cycle.
\end{defn}
In 2007, Richard Stanley asked for the determination
of the value of $C_n(\sigma)$ for any $\sigma\in \fS_3$.
All cases of this problem remain open;
this problem is difficult because it requires considering
both views of permutations described above.

Work on the analogous problem for cyclic permutations avoiding two patterns
was begun by Archer and Elizalde in 2014.
They showed the following result.
\begin{thm}\cite{AE}
    Let $\mu$ be the number-theoretic M\"obius function.  For all positive integers $n$,
    \[
        C_n(132,321) =
        \f{1}{2n}\sum_{\substack{d|n \\ \text{$d$ odd}}} \mu(d) 2^{n/d}.
    \]
\end{thm}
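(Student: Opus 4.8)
The plan is to reduce the count to a problem about binary necklaces, which is where the M\"obius weights and powers of two on the right-hand side should originate. First I would determine the structure of the $(132,321)$-avoiding permutations by tracking the position $k$ of the largest entry, $\pi_k=n$. Avoiding $132$ immediately forces a split: every entry to the left of position $k$ must exceed every entry to its right, since a smaller left entry together with $n$ and any larger right entry is a $132$. This identifies the values in the two blocks (namely $\{n-k+1,\dots,n-1\}$ on the left and $\{1,\dots,n-k\}$ on the right) and, combined with the $321$ condition on descents within and across the blocks, pins each block down up to a monotonicity constraint. The outcome is a rigid recursive description of the whole class, consistent with the classical total count $\binom{n}{2}+1$; the only freedom relevant to the cyclic count is how the maximal entry interacts with the structure induced on the remaining values, and in particular any $\pi$ with $k=n$ fixes $n$ and can be discarded at once.

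Second, I would translate the single-cycle condition into the language of necklaces. The key step is to encode each admissible permutation by a length-$n$ binary word recording its local shape read cyclically, so that conjugating by the long cycle becomes the rotation action of $\mathbb{Z}/n$ on words and being a single $n$-cycle corresponds exactly to the word being primitive (aperiodic). A single further order-two symmetry, reversal composed with complementation, should act compatibly with the rotation and generate a group of order $2n$; this is the expected source of the normalization $\tfrac{1}{2n}$ rather than $\tfrac1n$. Counting primitive words up to this action by M\"obius inversion over the divisors of $n$ (equivalently Burnside over the order-$2n$ group), where one verifies that only the odd divisors $d\mid n$ contribute surviving fixed configurations, should produce directly the sum $\tfrac{1}{2n}\sum_{d\mid n,\ d\text{ odd}}\mu(d)\,2^{n/d}$ claimed in the statement.

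The main obstacle is the construction and verification of this binary encoding. One must exhibit an explicit bijection between $(132,321)$-avoiding $n$-cycles and primitive necklaces in the chosen model, prove that the single-$n$-cycle condition matches primitivity, and show that the reversal--complement involution behaves precisely as asserted. Getting the parity bookkeeping right, so that exactly the odd divisors survive and the normalization is $\tfrac1{2n}$, is the delicate part; I would anchor it by checking the correspondence directly for small $n$ and by computing the fixed-point count of each element of the order-$2n$ group before invoking Burnside. Once the encoding and these fixed-point data are in hand, the passage to the closed form is a routine M\"obius computation.
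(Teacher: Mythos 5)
The paper does not actually prove this statement---it is quoted from Archer and Elizalde---so there is no internal proof to compare against; assessing your plan on its own terms, it runs into a genuine and fatal problem. Your first paragraph is correct but stops one step too early: once every entry left of the maximum exceeds every entry right of it, avoidance of $321$ forces \emph{both} blocks to be increasing whenever $k<n$ (a descent in the right block together with the entry $n$ is a $321$, and a descent in the left block followed by any right-block entry is a $321$). Hence for $k<n$ the permutation is completely rigid, namely $\pi(i)\equiv i+(n-k)\pmod n$, and for $k=n$ the entry $n$ is a fixed point and one recurses. In particular the $(132,321)$-avoiding $n$-cycles are exactly the translations $i\mapsto i+t\pmod n$ with $\gcd(t,n)=1$, so $C_n(132,321)=\phi(n)$. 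This agrees with the B\'ona--Cory identity $C_n(132,321)=\phi(n)$ quoted later in this same section, and it contradicts the displayed M\"obius sum already at $n=3$, where the sum gives $\frac{1}{6}(8-2)=1$ while there are two $(132,321)$-avoiding $3$-cycles, namely $231$ and $312$. So your own structural analysis refutes the formula you set out to prove for this pattern pair.

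Consequently the necklace program of your second and third paragraphs cannot be carried out for $(132,321)$: there is no bijection with primitive binary necklaces to be found, because the answer is $\phi(n)$ rather than $\frac{1}{2n}\sum_{d\mid n,\ d\ \mathrm{odd}}\mu(d)2^{n/d}$. The displayed formula is the count of \emph{cyclic unimodal} permutations, i.e.\ the class avoiding $\{213,312\}$ (equivalently $\{132,312\}$, $\{132,231\}$, or $\{213,231\}$ under the cycle-type-preserving symmetries), which Archer and Elizalde obtain essentially as you describe, via periodic orbits of a signed shift on two letters and a Burnside/M\"obius count over an order-$2n$ group; the statement as printed evidently carries a typo in the pattern pair. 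Your encoding-by-shifts plan is the right plan for that class, but aimed at $(132,321)$ it targets a false identity, and the ``parity bookkeeping'' you flag as the main obstacle is not the real obstacle---the real obstacle is that no correct argument can end at the stated formula for this pair.
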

This result was proved by studying permutations realized by shifts.
For more results about such permutations, see \cite{AEK, Eli1, Eli2}.
B\'ona and Cory \cite{BC} determined $C_n(\sigma_1,\sigma_2)$
for several pairs $(\sigma_1,\sigma_2)\in \fS_3\times \fS_3$.
Their most significant result is as follows.
\begin{thm}\cite{BC}
    Let $\phi$ be the Euler totient function.  For all positive integers $n$,
    \[
        C_n(123,231) =
        \begin{cases}
            \phi(\f n2) & n\equiv 0\pmod 4, \\
            \phi(\f{n+2}{4}) + \phi(\f n2) & n\equiv 2 \pmod 4, \\
            \phi(\f{n+1}{2}) & n\equiv 1\pmod 2.
        \end{cases}
    \]
\end{thm}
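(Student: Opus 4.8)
\emph{Proof strategy.} The plan is to first pin down the structure of $\{123,231\}$-avoiding permutations combinatorially, read off an explicit one-line formula, and then translate the single-cycle condition into a coprimality count that produces the totient.

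First I would establish a structure theorem by examining the position of the largest entry $n$ in such a permutation $\pi\in\fS_n$. To avoid $231$, every entry left of $n$ must be smaller than every entry right of $n$; to avoid $123$, the entries left of $n$ must be decreasing, and if the left part is nonempty this forces the right part to be decreasing as well. Hence either $n$ sits at the front (and we recurse on $\pi_2\cdots\pi_n$), or $\pi$ is completely determined. Unfolding the recursion shows every such permutation is the skew sum $\delta_a \ominus (\delta_{i-1}\oplus\delta_{m-i+1})$ for parameters $a\ge 0$, $m=n-a$, $2\le i\le m$ (together with the fully decreasing permutation). In one-line notation this says $\pi(p)=c_{X(p)}-p$, where $X(p)\in\{A,B,C\}$ is the block containing position $p$, with $c_A=n+1$, $c_B=a+i$, $c_C=n+i$; the count $A_n(123,231)=\binom n2+1$ is recovered as a sanity check.

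Next I would compute cycles from this explicit form. Since each of the three linear pieces has slope $-1$, the key observation is that $\pi^2(p)=p+\big(c_{X(\pi(p))}-c_{X(p)}\big)$ is a piecewise translation, i.e.\ an interval-exchange map; equivalently, each orbit is a ``zigzag'' that interleaves the small values $[1,m]$ with the large values $[m+1,n]$. Tracking a single orbit, the permutation acquires a fixed point or a short cycle unless the two value ranges are nearly balanced, which forces $m=\lceil n/2\rceil$ (and, when $n\equiv 2\pmod 4$, also permits $m=\tfrac{n+2}{2}$ through a parity coincidence); for these admissible $m$ the number of cycles equals the $\gcd$ of an offset determined by $i$ with the relevant modulus, so $\pi$ is a single $n$-cycle exactly when that offset is coprime to the modulus. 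Ruling out every other value of $m$ by exhibiting a forced fixed point (from block $B$ when $i\ge a+2$ and $a+i$ is even, or from block $C$ when $i\le 2m-n$ and $i\equiv n\pmod 2$) or a short cycle is where most of the case bookkeeping lives.

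Finally I would count. For the main family $m=\lceil n/2\rceil$ the admissible offsets biject with residues coprime to the modulus, giving $\phi(\lceil n/2\rceil)$ single cycles; this accounts for the $\phi(\tfrac{n+1}{2})$ term when $n$ is odd and the $\phi(\tfrac n2)$ term when $n$ is even. When $n\equiv 2\pmod 4$ the second family $m=\tfrac{n+2}{2}$ survives and contributes $\phi(\tfrac{n+2}{4})$ further single cycles, producing the extra summand. Summing and simplifying with elementary totient identities yields the three-case formula, after checking the small cases $n\le 2$ directly. The hard part will be Step two: obtaining the exact cycle count from the interval-exchange description and proving that the admissible moduli are precisely $\lceil n/2\rceil$, together with $\tfrac{n+2}{2}$ in the $n\equiv 2\pmod 4$ case, and in particular explaining cleanly why the second family appears only for that residue; everything downstream is a routine coprimality count.
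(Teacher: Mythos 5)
First, a caveat about the comparison itself: this theorem appears in the paper only as a quotation from B\'ona and Cory \cite{BC}; the paper contains no proof of it, so there is no in-paper argument to measure your proposal against, and I can only judge it on its own terms. Your Step 1 is correct and routine to complete: placing the maximum and recursing does show that every $\{123,231\}$-avoider is either the decreasing permutation or equals $n,n-1,\dots,m+1,\;j-1,\dots,1,\;m,m-1,\dots,j$ for some $m=n-a$ and $2\le j\le m$, each block being a reflection $p\mapsto c-p$, and the count $\binom n2+1$ confirms the classification. Your final tally is also consistent with the target formula, and I verified for $n\le 6$ that the single cycles do come from $m=\lceil n/2\rceil$ (contributing $\phi(\lceil n/2\rceil)$) plus, when $n\equiv 2\pmod 4$, from $m=(n+2)/2$ (contributing $\phi((n+2)/4)$). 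One caution: as stated the formula gives $2$ at $n=2$ while $C_2(123,231)=1$, so your planned ``check small cases $n\le 2$ directly'' will turn up a boundary exception rather than a confirmation.

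The genuine gap is the one you flag yourself: Step 2 is a plan, not an argument. Your fixed-point criteria are correct (block $B$ yields the fixed point $p=(a+j)/2$ exactly when $j\ge a+2$ and $a+j$ is even; block $C$ yields $p=(n+j)/2$ exactly when $j\le 2m-n$ and $j\equiv n\pmod 2$), but they do not eliminate all inadmissible parameters: for $n=6$ the choices $(m,j)=(6,3)$ and $(m,j)=(2,2)$ give $216543=(12)(36)(45)$ and $654312=(1625)(34)$, both fixed-point-free and both non-cyclic. So the heart of the proof --- showing that every $m$ outside your two admissible values forces a proper sub-cycle for \emph{every} $j$, that for admissible $m$ the number of cycles equals a specific $\gcd$ so that the good offsets are counted by a totient, and explaining why the second family exists only for $n\equiv 2\pmod 4$ --- is asserted rather than established. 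Until that orbit computation from $\pi^2(p)=p+\bigl(c_{X(\pi(p))}-c_{X(p)}\bigr)$ is carried out in full, you have a credible and (as far as I can test) correct outline, not a proof.
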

B\'ona and Cory also proved the following formulae for other pairs $(\sigma_1,\sigma_2) \in \fS_3\times \fS_3$.
\begin{thm}\cite{BC}
    The following identities hold.
    \begin{itemize}
        \item For all $n\ge 5$, $C_n(123,321)=0$.
        \item For all $n\ge 3$, $C_n(231,312)=0$.
        \item For all positive integers $n$, $C_n(231,321)=1$.
        \item For all positive integers $n$, $C_n(132,321)=\phi(n)$.
        \item For all positive integers $n$, $C_n(123,132)=2^{\lfloor (n-1)/2\rfloor}$.
    \end{itemize}
\end{thm}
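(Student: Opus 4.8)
The plan is to handle the five identities by a common device: for each pattern pair I would describe the structure of the $(\sigma_1,\sigma_2)$-avoiding permutations, almost always by conditioning on the position of the largest entry $n$, and then read off which avoiders consist of a single $n$-cycle. The first identity needs no structure at all. By the Erd\H{o}s--Szekeres theorem every permutation of $[n]$ with $n>(3-1)^2=4$ contains a monotone subsequence of length $3$, hence contains $123$ or $321$; so for $n\ge 5$ no permutation of $[n]$ avoids both, and a fortiori $C_n(123,321)=0$.

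For the pairs $(231,312)$ and $(231,321)$ I would first note that avoiding $231$ with $n$ in the role of the peak forces every entry left of $n$ to be smaller than every entry to its right. Writing $k$ for the position of $n$, the entries left of $n$ are then $\{1,\dots,k-1\}$ in some avoiding order $\alpha$, while the entries at positions $k+1,\dots,n$ form a monotone arrangement of $\{k,\dots,n-1\}$ --- decreasing to avoid $312$, increasing to avoid $321$. Both $\{1,\dots,k-1\}$ and $\{k,\dots,n\}$ are $\pi$-invariant, so it suffices to understand the cycle structure on $\{k,\dots,n\}$. For $(231,312)$ the tail is $n-1,n-2,\dots,k$, so on $\{k,\dots,n\}$ the map is the reflection $j\mapsto n+k-j$, an involution, which contributes only fixed points and transpositions; hence for $n\ge 3$ no avoider can be an $n$-cycle and $C_n(231,312)=0$. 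For $(231,321)$ the tail is $k,k+1,\dots,n-1$, and a direct trace shows $\pi$ acts on $\{k,\dots,n\}$ as the single cycle $(k,\,n,\,n-1,\,\dots,\,k+1)$; this exhausts $[n]$ exactly when $k=1$, which forces $\alpha$ empty and $\pi=n\,1\,2\cdots(n-1)$. Thus there is a unique such $n$-cycle and $C_n(231,321)=1$.

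For $(132,321)$ the key observation is dual: avoiding $132$ with $n$ in the middle forces every entry left of $n$ to exceed every entry to its right, while avoiding $321$ forces the entries after $n$ to increase and, when the right block is nonempty, the entries before $n$ to increase as well. Hence, with $n$ at position $p$, either $p=n$, in which case $n$ is a fixed point and $\pi$ is not an $n$-cycle, or $\pi=(n-p+1)\cdots(n-1)\,n\,1\,2\cdots(n-p)$, which is exactly the cyclic shift $x\mapsto x+(n-p)\pmod n$. A cyclic shift $x\mapsto x+s$ is a single $n$-cycle precisely when $\gcd(s,n)=1$, and as $p$ ranges over $1,\dots,n-1$ the shift $s=n-p$ ranges over $1,\dots,n-1$; counting those coprime to $n$ yields $C_n(132,321)=\phi(n)$.

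The fifth identity is the one I expect to fight with. Avoiding both $123$ and $132$ is equivalent to the condition that every entry has at most one larger entry to its right, and conditioning on $n$ turns this into the recursive block form $\pi=(n-1)(n-2)\cdots(n-m+1)\,n\,\sigma$ with $\sigma$ a $(123,132)$-avoider of $\{1,\dots,n-m\}$; iterating, each avoider is encoded by a composition of $n$, equivalently by its set $E\subseteq[n]$ of block-ends (the partial sums, with $n\in E$). I would prove the closed form
\[
    \pi(p)=\begin{cases} n-p, & p\notin E,\\ n-\operatorname{pred}_E(p), & p\in E,\end{cases}
\]
where $\operatorname{pred}_E(p)$ is the largest element of $E$ below $p$, or $0$. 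Since $\pi^2(p)=p$ whenever $p$ and $n-p$ both lie outside $E$, a necessary condition for an $n$-cycle (for $n\ge 3$) is that every complementary pair $\{p,n-p\}$ meet $E$, together with $n/2\in E$ when $n$ is even. The hard part is that this is far from sufficient --- the shifted values at the block-ends can still create short cycles --- so I would analyze the block-end dynamics (each block maps onto the complementary interval belonging to the reversed composition) to extract the exact single-cycle criterion, and then show the number of admissible $E$ is $2^{\lfloor(n-1)/2\rfloor}$. This final enumeration is the main obstacle; the other four identities are comparatively routine once the position-of-$n$ decomposition is in place.
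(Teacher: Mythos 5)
This statement is quoted from B\'ona and Cory \cite{BC}; the paper under review gives no proof of it, so your argument can only be judged on its own terms. On those terms, your proofs of the first four identities are correct: Erd\H{o}s--Szekeres disposes of $(123,321)$; the position-of-$n$ decomposition correctly shows that a $(231,312)$- or $(231,321)$-avoider splits $[n]$ into the invariant blocks $\{1,\dots,k-1\}$ and $\{k,\dots,n\}$, with the second block carrying an involution in the first case (hence no $n$-cycle for $n\ge 3$) and the cycle $(k,n,n-1,\dots,k+1)$ in the second (forcing $k=1$ and the unique permutation $n\,1\,2\cdots(n-1)$); and the $(132,321)$-avoiders are exactly the cyclic shifts $x\mapsto x+s \pmod n$ together with the identity, giving $\phi(n)$. (In the last two cases you should also say a word on the converse direction --- that $n\,1\,2\cdots(n-1)$ and the shifts with $\gcd(s,n)=1$ really do avoid the relevant patterns --- but both checks are immediate.)

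The genuine gap is the fifth identity, and you acknowledge it yourself. Your structural setup is right: a $(123,132)$-avoider is determined by a composition of $n$, equivalently by its set $E$ of block-ends, with $\pi(p)=n-p$ off $E$ and $\pi(p)=n-\operatorname{pred}_E(p)$ on $E$. But the proof stops exactly where the content begins. The necessary condition you extract (every pair $\{p,n-p\}$ meets $E$, and $n/2\in E$ for even $n$) admits roughly $3^{(n-1)/2}$ sets $E$, an order of magnitude more than the claimed $2^{\lfloor(n-1)/2\rfloor}$, so the entire theorem rests on the unproved exact single-cycle criterion and its enumeration --- you describe this as "the main obstacle" and an analysis you "would" do, but do not carry it out. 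As written, $C_n(123,132)=2^{\lfloor(n-1)/2\rfloor}$ is not established; you need to either prove the precise characterization of cyclic $E$ (and show it is counted by $2^{\lfloor(n-1)/2\rfloor}$) or defer to \cite{BC} for this item.
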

Other results regarding pattern avoiding permutations with given cyclic structure can be found in
\cite{Gan, GeRe, Thi, WeRo}.

The above formulae by \cite{AE, BC} and analogous formulae obtained from them by symmetry
determine the values of $C_n(\sigma_1,\sigma_2)$
for each pair of distinct patterns $(\sigma_1,\sigma_2)\in \fS_3\times \fS_3$ except $(132,213)$.
This motivates the following problem.
\begin{prob}\label{prob:main}
    Determine an explicit formula for $C_n(132,213)$.
\end{prob}
Solving this problem would complete the enumeration of
cyclic permutations avoiding pairs of patterns of length 3.

The main result of this paper is the following bound.
\begin{thm}\label{thm:main}
    For all positive integers $n$, $C_n(132,213) \le n^2 \cdot 2^{n/2}$.
\end{thm}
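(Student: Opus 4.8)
The plan is to translate the problem into one about compositions of $n$ and then to bound the number of compositions that could possibly yield a single $n$-cycle. First I would establish the standard structural description of $\mathrm{Av}(132,213)$: reasoning on the value $m=\pi_1$, avoidance of $213$ forces all values exceeding $m$ to precede all values below $m$ among positions $2,\dots,n$, and avoidance of $132$ forces the values exceeding $m$ to appear in increasing order, after which the values below $m$ recurse. Iterating, a $(132,213)$-avoiding permutation is encoded by a composition $(b_1,\dots,b_k)$ of $n$ (its \emph{layer lengths}); writing $B_i=b_1+\cdots+b_i$ and $B_0=0$, the permutation is given explicitly by $\pi(B_{i-1}+j)=n-B_i+j$ for $1\le j\le b_i$. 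This matches the count $A_n(132,213)=2^{n-1}$. The key observation I would record is that $\pi=\rho\circ\sigma$, where $\rho(x)=n+1-x$ is the global reversal and $\sigma$ reverses each block $[B_{i-1}+1,B_i]$ internally; both $\rho$ and $\sigma$ are involutions.

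Next I would extract a necessary condition for $\pi$ to be an $n$-cycle purely in terms of the layer lengths. Since $\pi$ is a product of two involutions, I would analyze the graph $G=M_\rho\cup M_\sigma$ on vertex set $[n]$, where $M_\rho$ and $M_\sigma$ are the (partial) matchings induced by $\rho$ and $\sigma$. The group $\langle\rho,\sigma\rangle$ is dihedral, its orbits are the connected components of $G$, and within each component $\pi=\rho\sigma$ acts as a single orbit when the component is a path and as two orbits when it is a cycle. Hence $\pi$ is a single $n$-cycle only if $G$ is a single path covering all of $[n]$, i.e.\ has exactly two vertices of degree one and none of degree zero. The degree-one vertices are exactly the points fixed by $\rho$ (the center $\tfrac{n+1}{2}$, present iff $n$ is odd) or by $\sigma$ (the center of each odd-length block), while a degree-zero vertex is fixed by both and would force a fixed point of $\pi$. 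Counting these defects, I would conclude that $\pi$ can be an $n$-cycle only if $(b_1,\dots,b_k)$ has exactly two odd parts when $n$ is even, and exactly one odd part when $n$ is odd. I expect this step — rigorously matching the cycle structure of $\pi$ to the path/cycle components of $G$, including the parity bookkeeping and coincident-center edge cases — to be the main obstacle.

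With this necessary condition in hand, the bound follows by counting compositions with the prescribed number of odd parts, discarding the (harder) connectivity requirement since I only need an upper bound. For $n$ even, a composition into $k$ parts with exactly two odd parts is obtained by choosing the $\binom{k}{2}$ positions of the odd parts and then solving $a_1+a_2+\sum_j e_j = \tfrac{n}{2}-1$ with $a_i\ge 0$ and the shifted even parts $e_j\ge 0$, giving $\binom{n/2}{\,k-1}$ solutions. Summing over $k$ and using $\binom{k}{2}\le n^2/2$ for $k\le n$,
\[
\sum_{k\ge 2}\binom{k}{2}\binom{n/2}{\,k-1}\;\le\;\frac{n^2}{2}\sum_{k\ge 2}\binom{n/2}{\,k-1}\;\le\;\frac{n^2}{2}\cdot 2^{n/2}\;\le\; n^2\cdot 2^{n/2}.
\]
The odd-$n$ case (exactly one odd part) is handled identically and yields an even smaller bound. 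Steps one and three are essentially routine; the enumerative content of the theorem is concentrated in the cyclicity criterion of the middle step, and I would budget most of the effort there.
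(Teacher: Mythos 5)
Your argument is correct, but it reaches the key structural fact --- Proposition~\ref{prop:main-unbalanced}, that a cyclic composition has exactly one odd part for odd $n$ and exactly two for even $n$ --- by a genuinely different route than the paper. The paper works with cycle diagrams and obtains the parity statement as a byproduct of a complete decision procedure: a Euclidean-algorithm-like reduction $\red$ on the two innermost layers of a balanced composition (Lemma~\ref{lem:cycle-reduction}), preceded by an equalization step for unbalanced compositions (Lemma~\ref{lem:unbalanced-cycle-reduction}); since each step preserves the number of odd parts and every cyclic composition terminates at $(1|1)$, the parity condition follows. You instead factor the reverse layered permutation as $\rho\circ\sigma$, with $\rho(x)=n+1-x$ and $\sigma$ the blockwise reversal (a factorization that is easily checked from $\pi(B_{i-1}+j)=n-B_i+j$), and read the necessary condition off the classical path/cycle dichotomy for a union of two matchings: a single $n$-cycle forces the union to be a spanning path, hence exactly two degree-one vertices, and these are precisely the fixed points of $\rho$ (one iff $n$ is odd) and of $\sigma$ (one per odd block), with a common fixed point excluded since it would be fixed by $\pi$. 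This is shorter and more self-contained for the purpose of the upper bound, and the dihedral-orbit argument is standard, so I see no gap. What your route does not give is the paper's second main result, Algorithm~\ref{alg:main}: the degree count is only an obstruction, and deciding whether the union of the two matchings is actually a \emph{connected} path is exactly what the reduction algorithm accomplishes. The final enumeration is essentially the same in both treatments (the paper decrements terms of an all-even composition of $n+1$ or $n+2$ rather than using stars and bars, but the resulting bound $n^2\cdot 2^{n/2}$ is identical); one small slip to fix is your stars-and-bars setup, where the even parts should be written as $2e_j+2$ with $e_j\ge 0$ so that the count $\binom{n/2}{k-1}$ comes out as stated.
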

To our knowledge,
this is the first nontrivial upper bound of $C_n(132,213)$,
though computer experiments suggest this bound is not asymptotically tight.

A \vocab{composition} of $n$ is a tuple $(a_1,\ldots,a_k)$ of positive integers
with sum $n$.
The $(132,213)$ avoiding permutations of size $n$
are the so-called \vocab{reverse layered permutations},
which correspond to the compositions of $n$.
We refer to compositions corresponding to cyclic permutations as cyclic compositions.
The second main result of this paper is the following algorithm,
which determines, without reference to the associated permutation,
whether a composition $C=(a_1,\ldots,a_k)$ is cyclic.
\begin{alg}\label{alg:main}
    Take as input a composition $C$.
    Run the \vocab{repeated reduction algorithm} $\rred$
    on the \vocab{equalization} $\eq (C)$.
    If $\rred$ outputs that $\eq(C)$ is cyclic, output that $C$ is cyclic.
    Otherwise, output that $C$ is not cyclic.
\end{alg}
The operations $\rred$ and $\eq$ are defined in Sections~\ref{sec:balanced}
and \ref{sec:unbalanced}, respectively.
This result is interesting in its own right,
and is the key step in the proof of Theorem~\ref{thm:main};
it implies that any cyclic composition has one or two odd terms,
which gives the bound in Theorem~\ref{thm:main}.

The rest of this paper is structured as follows.
In Section~\ref{sec:prelim} we formalize the connection between
$(132,213)$-avoiding permutations and compositions of $n$.
We introduce the notions of
\vocab{balanced compositions} and \vocab{cycle diagrams},
tools that will be useful in our analysis.
In Section~\ref{sec:balanced} we prove our results for balanced permutations.
In Section~\ref{sec:unbalanced} we generalize our results
to all permutations.
Finally, in Section~\ref{sec:conclusion}
we present some directions for further research.

\section{Preliminaries}\label{sec:prelim}

\subsection{Reverse Layered Permutations}

The \vocab{skew sum} $\pi \ominus \tau$ of permutations
$\pi\in \fS_m$, $\tau \in \fS_n$ is defined by
\[
(\pi\ominus \tau)(i) =
\begin{cases}
\pi(i)+n & i \le m \\
\tau(i-m) & i>m
\end{cases}
\]
for all $i\in [m+n]$.
Note that $\ominus$ is an associative operation.
Moreover, let $I_n=123\cdots n$ denote the identity permutation on $n$ elements.

A \vocab{reverse layered permutation} is a permutation of the form
\[\pi = I_{a_1}\ominus I_{a_2}\ominus \cdots \ominus I_{a_k},\]
for some positive integers $a_1,\ldots,a_k$ summing to $n$.
Explicitly, a reverse layered permutation is of the form
\[\pi = (n-a_1+1)\cdots (n) ~ (n-a_1-a_2+1)\cdots (n-a_1) ~ \cdots ~ (1)(2)\cdots (a_k).\]
It is known that the $(132,213)$ avoiding permutations
are the reverse layered permutations \cite{Bon}.
We can bijectively identify these permutations with
the compositions $(a_1,\ldots,a_k)$ of $n$.

As an immediate consequence,
there are $2^{n-1}$ reverse layered permutations of length $n$,
corresponding to the $2^{n-1}$ compositions of $n$.
It remains, therefore, to determine the number of these permutations
that are also cyclic.

Recall that a composition of $n$ is \vocab{cyclic}
if its associated reverse layered permutation is cyclic.
Determining $C_n(132,213)$ is therefore equivalent
to counting the cyclic compositions of $n$.

\subsection{Balanced Compositions}

We say a composition $C=(a_1,\ldots,a_k)$ of $n$ is \vocab{balanced} if
some prefix $a_1,\ldots,a_j$ has sum $\f n2$.
We denote such compositions with the notation
$(a_1,\ldots,a_j | a_{j+1},\ldots , a_k)$.
Otherwise, we say $C$ is \vocab{unbalanced}.
Note that compositions of odd $n$ are all unbalanced.

Equivalently, $C$ is balanced if its
associated reverse layered permutation $\pi$
has the property that $\pi(i) > \f n2$ if and only if $i\le \f n2$.
We say a reverse layered permutation is \vocab{balanced}
if it has this property, and \vocab{unbalanced} otherwise.

\begin{ex}
    The composition $C=(1,2,1,2)$ is balanced, and corresponds to the
    reverse layered permutation $\pi=645312$.
    The permutation $\pi$ has the property that $\pi(i)>3$ if and only if $i\le 3$.
    The composition $C'=(1,3,2)$ is unbalanced, and corresponds to the
    reverse layered permutation $\pi'=634512$.
    The permutation $\pi'$ does not have this property.
\end{ex}

\subsection{Cycle Diagrams}

Define the \vocab{graph} of a permutation $\pi\in \fS_n$
as the collection of points $(i, \pi(i))$, for $i\in [n]$.
The \vocab{cycle diagram} of $\pi$ is obtained from the graph of $\pi$
by drawing vertical and horizontal line segments, called \vocab{wires},
from each point in the graph to the line $y=x$.
We say the points $(i, \pi(i))$ are the \vocab{points} of the cycle diagram.
By slight abuse of notation, we say the cycle diagram of a composition $C$
is the cycle diagram of its associated permutation.

The wires in the cycle diagram of a permutation $\pi$ form one or more contiguous loops.
Each loop has the property that, when followed clockwise,
the column it visits after the $i^{\text{th}}$ column is the $\pi(i)^{\text{th}}$ column.
Thus, the loops of the cycle diagram of $\pi$ coincide with the cycle decomposition of $\pi$;
in particular, a permutation is cyclic if and only if
the wires in its cycle diagram make a single closed loop.
Moreover, a permutation is balanced if and only if each point in its cycle diagram is,
along the wire path, adjacent to two points on the opposite side of the line $y=x$.

Each layer $I_{a_i}$ in a reverse layered permutation corresponds
to a layer of $a_i$ diagonally-adjacent points in the cycle diagram
running from bottom left to top right;
successive layers are ordered from top left to bottom right.

\begin{ex}
    Figure~\ref{fig:cycle-diagram} shows the cycle diagrams of
    the balanced cyclic permutation $645312$,
    the unbalanced cyclic permutation $53412$,
    and the balanced noncyclic permutation $456321$.
    In the cycle diagram of $645312$, the wire forms a single loop, and
    each point (marked with $\times$) is, along the wire path,
    adjacent to two points on the opposite side of the line $y=x$.
    In the cycle diagram of $53412$, the wire still forms a single loop, but
    the two points in the middle layer are adjacent to each other on the wire path,
    and not to points on the opposite side of the line $y=x$.
    In the cycle diagram of $456321$, the wire does not form a single loop.
    \begin{figure}
        \centering
        \includegraphics{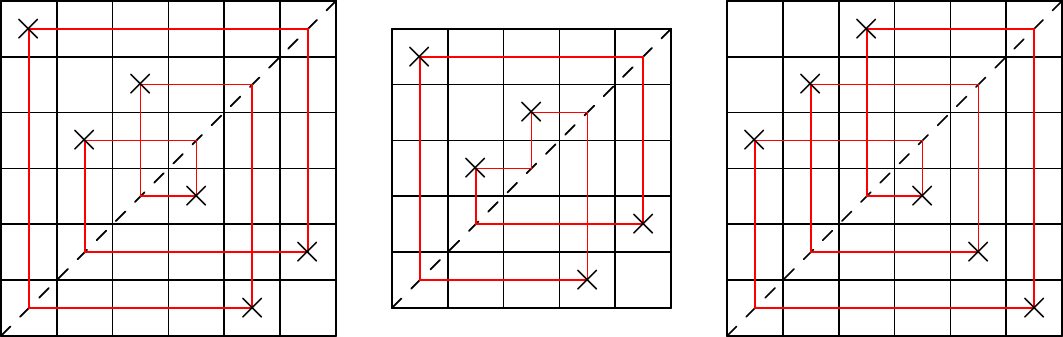}
        \caption{
            Cycle diagrams for permutations $645312$, $53412$, and $456321$,
            with associated compositions $(1,2|1,2)$, $(1,2,2)$, and $(3|1,1,1)$, respectively.
            The line $y=x$ is shown by a dashed line.
        }
        \label{fig:cycle-diagram}
    \end{figure}
\end{ex}

\section{Determining if a Balanced Composition is Cyclic}\label{sec:balanced}

In this section, we will prove specialized versions of our main results
for balanced compositions.
We will generalize these results to all compositions in the next section.

\subsection{Reducing Cycles}

The key result in this subsection is the Cycle Reduction Lemma,
which is Lemma~\ref{lem:cycle-reduction}.

\begin{lem}\label{lem:prep-cycle-reduction}
    Suppose the balanced composition
    \[
        (a_1,\ldots,a_k | b_m,\ldots,b_1)
    \]
    is cyclic, and $a_k = b_m$.  Then $a_k=b_m=k=m=1$ and $n=2$.
\end{lem}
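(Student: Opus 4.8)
The plan is to compute explicitly the two layers adjacent to the balance bar and to observe that the hypothesis $a_k=b_m$ forces them to be reflections of one another across the line $y=x$, which produces a transposition in $\pi$ and hence a contradiction with cyclicity unless everything collapses to the trivial case.

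First I would record the relevant coordinates. Write $c=a_k=b_m$, and recall that the balance condition means the prefix $a_1,\ldots,a_k$ has sum $n/2$, so that $a_1+\cdots+a_k=b_1+\cdots+b_m=n/2$. The block $I_{a_k}$ is the $k$-th layer of $\pi$; since the preceding layers occupy columns $1,\ldots,n/2-c$ and take the top $n/2-c$ values, $I_{a_k}$ occupies columns $n/2-c+1,\ldots,n/2$ and satisfies $\pi(n/2-c+j)=n/2+j$ for $1\le j\le c$. The block $I_{b_m}$ is the very next layer: it occupies columns $n/2+1,\ldots,n/2+c$, and because the first $k$ layers have exhausted the values $n/2+1,\ldots,n$, it satisfies $\pi(n/2+j)=n/2-c+j$ for $1\le j\le c$.

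Next I would compose the two formulas. For each $j$ with $1\le j\le c$ we get $\pi(\pi(n/2-c+j))=\pi(n/2+j)=n/2-c+j$, so $\{n/2-c+j,\ n/2+j\}$ is a transposition of $\pi$; the two indices are distinct because $c\ge 1$. In cycle-diagram language, the mirror-image layers $I_{a_k}$ and $I_{b_m}$ simply close off into $c$ disjoint rectangular $2$-cycles straddling the diagonal. Since $\pi$ is cyclic it is a single $n$-cycle, which can contain a transposition only when $n=2$; thus $n=2$. Finally $n/2=1$ forces $a_1+\cdots+a_k=1$ and $b_1+\cdots+b_m=1$, whence $k=m=1$ and $a_k=b_m=1$, as claimed.

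I expect the only delicate point to be the bookkeeping of layer positions and values, in particular confirming that the block immediately following $I_{a_k}$ is exactly $I_{b_m}$ and that the equality $a_k=b_m$ makes these two blocks coincide under reflection across $y=x$. Once the two displayed evaluations of $\pi$ are in hand, the transposition, and therefore the contradiction with $\pi$ being an $n$-cycle, is immediate.
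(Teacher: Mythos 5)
Your proof is correct and follows essentially the same route as the paper: both identify that when $a_k=b_m=c$ the two innermost layers pair off into $2$-cycles (the paper exhibits the single $2$-cycle $(\tfrac n2,\tfrac n2+c)$, you exhibit all $c$ of them), and both conclude that a cyclic permutation admitting a transposition forces $n=2$. The extra bookkeeping of layer positions you carry out is a more explicit version of the same argument, not a different one.
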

\begin{proof}
    If $a_k=b_m$, then the associated permutation $\pi$
    contains the $2$-cycle $\lt(\f n2, \f n2+b_m\rt)$.
    Since the composition is cyclic, this $2$-cycle is the only cycle,
    and thus $n=2$ and $a_k=b_m=k=m=1$.
\end{proof}

For a balanced composition
\[C = (a_1,\ldots,a_k | b_m,\ldots,b_1)\]
with $a_k \neq b_m$,
we define the \vocab{reduction} operation $\red$ as follows.
Let
\begin{eqnarray*}
    u &=& a_k \mod |a_k - b_m| \\
    v &=& |a_k-b_m| - u.
\end{eqnarray*}
If $a_k>b_m$, define
\[
    \red(C) = (a_1,\ldots,a_{k-1},u,v | b_{m-1},\ldots,b_1).
\]
Analogously, if $a_k<b_m$, define
\[
    \red(C) = (a_1,\ldots,a_{k-1}| v,u,b_{m-1},\ldots,b_1).
\]
In both cases, if $u=0$, omit $u$ from the composition.

Note that $\red$ always decreases a composition's sum,
because $u+v=|a_k-b_m|<a_k+b_m$.

\begin{lem}[Cycle Reduction Lemma]\label{lem:cycle-reduction}
Suppose $a_k\neq b_m$.  Then, the composition
\[
    C = (a_1,\ldots,a_k | b_m,\ldots,b_1)
\]
is cyclic if and only if $\red(C)$ is cyclic.
\end{lem}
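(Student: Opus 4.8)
The plan is to understand precisely what the reduction operation $\red$ does to the cycle diagram, and to show that cycling through the reduction corresponds to a single ``folding'' move that preserves the number of loops in the diagram. Let me think about the geometry. The composition $C = (a_1,\ldots,a_k \mid b_m,\ldots,b_1)$ is balanced, so its cycle diagram splits across the line $y=x$: the layers $a_1,\ldots,a_k$ live above-left and the layers $b_m,\ldots,b_1$ live below-right, and by the balanced criterion recalled in Section~\ref{sec:prelim}, each point's wire path alternates sides. The innermost layers $a_k$ and $b_m$ are the two layers meeting at the center near the point $(n/2, n/2)$. My first step is to write out explicitly which points these two innermost layers contribute and to trace the wire segments connecting them, paying attention to the diagonal wire structure that a reverse layered permutation forces.

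First I would set up coordinates for the innermost layers. Assuming $a_k > b_m$, the layer $I_{a_k}$ occupies columns $n/2 - a_k + 1,\ldots,n/2$ and the layer $I_{b_m}$ occupies rows (equivalently the reflected columns) $n/2+1,\ldots,n/2+b_m$. I would track how the permutation $\pi$ maps the relevant indices, then follow the wires to see which points in the \emph{outer} layers $a_{k-1}$ and $b_{m-1}$ the central wires eventually connect to. The claim encoded by the definitions of $u = a_k \bmod |a_k-b_m|$ and $v = |a_k-b_m|-u$ is that, after the central $b_m$ points get ``absorbed'' by the opposing layer, what remains of $a_k$ reorganizes into two new innermost layers of sizes $u$ and $v$ on the same side. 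I would verify that the wire connectivity among all the \emph{other} points is completely unchanged by this absorption, so the only thing in question is whether the central wires merge loops or not.

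The key structural observation I would aim to establish is that replacing the innermost pair $(a_k \mid b_m)$ by $(u, v \mid )$ is realized by a sequence of \emph{local wire surgeries}, each of which is a reconnection of two wire strands that neither creates nor destroys a loop --- equivalently, each surgery is an ambient isotopy of the loop structure, or at worst a move known to preserve the loop count. Concretely, I expect that pairing up the $b_m$ central points against $b_m$ of the $a_k$ points cancels them in matched pairs along the diagonal, and that this cancellation is a bijection on loops. Once all $b_m$ points are cancelled, the residual $a_k - b_m$ points of the old inner layer get folded (via the reflection across $y=x$) into the arrangement of sizes $u$ and $v$ dictated by the modular reduction; this folding is where the $\bmod$ appears, since the fold wraps the residual layer against itself. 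I would make this rigorous either by an explicit index computation showing $\pi$ and $\pi' = \red(C)$'s permutation have the same cycle type after a relabeling, or by an inductive ``peeling'' argument on $\lfloor a_k/|a_k-b_m|\rfloor$.

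The main obstacle I anticipate is handling the folding step cleanly, i.e.\ justifying the appearance of $u = a_k \bmod |a_k-b_m|$ rather than a naive subtraction $a_k - b_m$. The modular structure strongly suggests that when $a_k$ greatly exceeds $b_m$, the residual layer repeatedly folds back on itself across the diagonal, and each fold reverses orientation and reflects the layer, so the surviving ``unpaired'' length is governed by the parity and magnitude of the fold count, producing the two sublayers $u$ and $v$. Pinning down exactly how the wire endpoints reattach after multiple folds, and checking that this matches the asserted $(u,v)$ split while preserving loop count, is the delicate part; I would handle it by carefully drawing the cycle diagram in a small representative case (say tracing the permutation values symbolically), extracting the reattachment rule, and then proving by induction that the rule is exactly the stated reduction. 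The symmetric case $a_k < b_m$ should then follow by the reflective symmetry of the whole setup across $y=x$, which swaps the roles of the $a$'s and $b$'s.
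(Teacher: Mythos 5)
Your overall strategy is the same as the paper's: localize attention to the two innermost layers, observe that the rest of the diagram interacts with them only through $|a_k-b_m|$ pairs of ``loose ends,'' and show that the internal wiring of the pair $(a_k\mid b_m)$ induces the same pairing on those loose ends as two consecutive layers of sizes $u$ and $v$. That is exactly the right reduction, and your observation that the outer wiring is untouched is the right reason why only the induced pairing matters. However, the proposal stops short of the one step that actually carries the content of the lemma: deriving the pairing and hence the appearance of $u=a_k\bmod|a_k-b_m|$. You defer this to ``extracting the reattachment rule'' from a small case and an unspecified induction. The paper's proof does this directly and briefly: each point of $L_b$ is joined by wires to two points of $L_a$ that are diagonally $D=a_k-b_m$ cells apart, so a wire entering at an upper-right loose end winds around the center, visiting points of $L_a$ that advance by $D$ each turn; writing $a_k=qD+u$, the upper-rightmost $u$ loose ends traverse $q+1$ points of $L_a$ and land on the lower-leftmost $u$ loose ends, and the remaining $v$ traverse $q$ points, which is precisely the pairing produced by consecutive layers of sizes $u$ and $v$. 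Without this computation the lemma is not proved.

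Two of your proposed routes also need correction. First, a ``local wire surgery'' that reconnects two strands never preserves the loop count: it merges two loops or splits one, so the count changes by exactly one. The moves that do preserve loop count are contractions of points lying in series on a wire (which is what absorbing $L_b$ amounts to, since each $L_b$ point is in series between two $L_a$ points); if you pursue the surgery framing you must use contractions, not reconnections, or else compare only the net boundary pairing as the paper does. Second, your ``folding/reflection'' picture of how the residue $a_k\bmod D$ arises is not what the geometry does: the wire does not fold back on itself with orientation reversals, it winds monotonically through $L_a$ in steps of $D$, and the modulus comes from how many full steps of size $D$ fit into $a_k$. Also, your description of the $b_m$ central points being ``cancelled in matched pairs along the diagonal'' against $b_m$ of the $a_k$ points misdescribes the incidence structure (each $L_b$ point is adjacent to two $L_a$ points, offset by $D$), and this is precisely the structure from which the correct pairing must be read off.
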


Before proving this lemma,
we give an example that captures the spirit of the proof.
\begin{ex}\label{ex:cyc-reduction}
    Let $a_k=5$, $b_m=2$, so $u=2$ and $v=1$.
    The Cycle Reduction Lemma states that a balanced composition
    \[
        C = (a_1,\ldots,a_{k-1},5|2,b_{m-1},\ldots,b_1)
    \]
    is cyclic if and only if
    \[
        \red(C) = (a_1,\ldots,a_{k-1},2,1|b_{m-1},\ldots,b_1)
    \]
    is cyclic.

    Let us focus on the layers corresponding to $a_k=5$ and $b_m=2$,
    which are the innermost layers of the cycle diagram
    on either side of the line $y=x$.

    The wires incident to these two layers connect with each other,
    leaving three pairs of loose ends, as shown in the left diagram
    in Figure~\ref{fig:cycle-reduction-example}.
    The wires induce a natural pairing on these loose ends:
    two loose ends are paired if they are connected by these wires.

    These loose ends are paired the same way by two layers
    of size $u=2$ and $v=1$, as shown in the right diagram
    in Figure~\ref{fig:cycle-reduction-example}.
    \begin{figure}
        \includegraphics{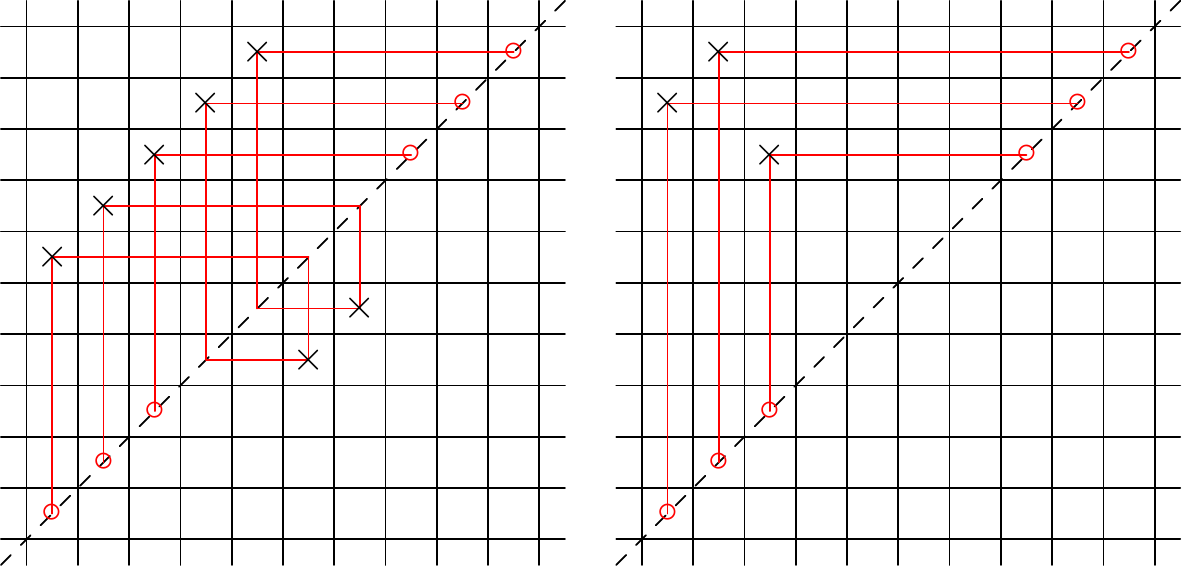}
        \caption{
            Cycle diagrams for a layer of size 5 opposite a layer of size 2 (left),
            and for consecutive layers of size 2 and 1 on the same side of the
            line $y=x$ (right), with loose ends.
            The line $y=x$ is shown by a dashed line, and loose ends are circled.
        }
        \label{fig:cycle-reduction-example}
    \end{figure}
    Consider any configuration of the remaining wires in the cycle diagram,
    which we call the \vocab{outer wiring}.
    These wires also form three pairs of loose ends, which connect with the
    three pairs of loose ends from the innermost wires.
    Thus, once we fix an outer wiring, whether the cycle diagram is cyclic
    depends only on the pairing of loose ends induced by the innermost wires.

    Since the wirings of the innermost layers of $C$ and $\red(C)$
    pair their loose ends the same way,
    the cycle diagram of $C$ is a single cycle if and only if
    the cycle diagram of $\red(C)$ is a single cycle.
\end{ex}

\begin{proof}[Proof of Lemma~\ref{lem:cycle-reduction}]
    Let us assume $a_k > b_m$; the case $a_k < b_m$ is symmetric.
    Let $D = a_k - b_m$.
    Thus $u = a_k \mod D$ and $v = D - u$.
    Let us write $a_k = qD+u$.

    Consider the cycle diagram of $C$.
    The terms $a_k$ and $b_m$ correspond to the two
    innermost layers of the cycle diagram on either side of the line $y=x$.
    Call these layers $L_a$ and $L_b$.
    When we connect the wires incident to these layers,
    each point in $L_b$ is connected to two points in $L_a$,
    which are diagonally $D$ cells apart.

    The rightward wires from the upper-right $a_k-b_m$ points in $L_a$
    and the downward wires from the lower-left $a_k-b_m$ points in $L_a$ are loose ends.
    Call these sets the \vocab{upper-right loose ends}
    and the \vocab{lower-left loose ends}.
    The wiring of $L_a$ and $L_b$ induces a pairing between these sets of loose ends,
    as described in Example~\ref{ex:cyc-reduction}.

    Let us determine this pairing by starting at
    one of the upper-right loose ends and following its wire.
    Every time the wire winds around the center
    of the cycle diagram, it passes through one point in $L_a$
    and one point in $L_b$;
    because each point in $L_b$ is connected to two points in $L_a$
    that are diagonally $D$ cells apart,
    successive points in $L_a$ and on this wire are diagonally $D$ cells apart.

    Let us examine the points this wire passes through in $L_a$.
    If we started at one of the upper-rightmost $u$ loose ends,
    the wire passes through $q+1$ points in $L_a$;
    thus the last point in $L_a$ on the wire is diagonally $qD$ cells
    from the first point in $L_a$ on the wire.
    Moreover, this implies that the upper-rightmost $u$ loose ends
    are paired with the lower-leftmost $u$ loose ends.
    The remaining $D-u=v$ upper-right loose ends pass through $q$ points in $L_a$;
    thus the last point in $L_a$ on their wires is diagonally $(q-1)D$ cells
    from the first point in $L_a$ on their wires.

    This pairing is the same as the pairing produced by two consecutive layers
    of $u$ and $v$ cells, with the layer of size $v$ on the inside.
    Therefore, any fixed outer wiring makes a cyclic wiring when connected
    with opposing layers of size $a_k, b_m$ if and only
    it makes a cyclic wiring when connected with consecutive layers of size $u, v$.
    So, the cycle diagram of $C$ consists of a single cycle
    if and only if the cycle diagram of $\red(C)$ consists of a single cycle.
\end{proof}

\subsection{The Repeated Reduction Algorithm}

Lemmas~\ref{lem:prep-cycle-reduction} and \ref{lem:cycle-reduction}
imply the following algorithm to determine if a balanced composition is cyclic.
This is Algorithm~\ref{alg:main} specialized to balanced compositions.
\begin{alg}[Repeated Reduction Algorithm]\label{alg:cycle-reduction}
    Take as input a balanced composition $C$.
    Repeatedly apply $\red$ to $C$ until $a_k=b_m$.
    If this procedure stops at the composition $(1|1)$, output that $C$ is cyclic.
    Otherwise, output that $C$ is not cyclic.
\end{alg}
We denote this algorithm by $\rred$.
Note that this algorithm must terminate,
because each application of $\red$ decreases the sum of the composition.

We can now prove a necessary condition for a balanced composition to be cyclic.
\begin{prop}\label{prop:main-balanced}
    Every cyclic balanced composition has exactly two odd entries.
\end{prop}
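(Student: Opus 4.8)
The plan is to reduce the statement to a single invariance property of the reduction operation $\red$. Recall that, by the Repeated Reduction Algorithm (Algorithm~\ref{alg:cycle-reduction}) together with Lemmas~\ref{lem:prep-cycle-reduction} and \ref{lem:cycle-reduction}, a balanced composition $C$ is cyclic if and only if repeatedly applying $\red$ terminates at the composition $(1|1)$: each step of $\red$ preserves cyclicity by the Cycle Reduction Lemma, strictly decreases the sum so the process halts at some composition with $a_k=b_m$, and by Lemma~\ref{lem:prep-cycle-reduction} the only cyclic such composition is $(1|1)$. Since $(1|1)$ has exactly two (odd) entries, it suffices to prove that $\red$ preserves the number of odd entries; invariance then propagates the count $2$ backwards along the entire reduction sequence from $(1|1)$ to $C$.

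So the heart of the argument is the following claim: \emph{the number of odd entries of a balanced composition is unchanged by $\red$.} To prove it I would assume $a_k > b_m$ (the case $a_k < b_m$ is symmetric) and set $D = a_k - b_m$, so that $u = a_k \bmod D$ and $v = D - u$. The operation deletes the entries $a_k$ and $b_m$ and inserts the entries $u$ and $v$, deleting $u$ when it equals $0$; since $0$ is even, this deletion never affects the odd-entry count. Hence I only need to check that the pairs $\{a_k, b_m\}$ and $\{u, v\}$ contain the same number of odd entries.

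I would verify this by splitting on the parity of $D$. Since $u \equiv a_k \pmod D$ and $b_m = a_k - D$, when $D$ is even we get $a_k \equiv b_m \equiv u \pmod 2$, and $v = D - u \equiv u \pmod 2$ as well; thus all four of $a_k, b_m, u, v$ share a common parity, and $\{a_k,b_m\}$ and $\{u,v\}$ each contain the same number (either $0$ or $2$) of odd entries. When $D$ is odd, $a_k$ and $b_m = a_k - D$ have opposite parities, so exactly one of $a_k, b_m$ is odd; likewise $u + v = D$ is odd, so exactly one of $u, v$ is odd. In either case the two pairs carry the same number of odd entries, which proves the claim.

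Combining the claim with the algorithmic characterization then finishes the proof: a cyclic balanced composition reduces to $(1|1)$, and since each reduction step preserves the odd-entry count while $(1|1)$ has exactly two odd entries, the original composition has exactly two odd entries as well. I expect the only delicate point to be the bookkeeping in the parity case analysis—specifically, confirming that the full \emph{count}, and not merely its parity, is preserved, and that silently dropping a zero term never disturbs it; once that is pinned down, the conclusion is immediate from the reduction machinery already established.
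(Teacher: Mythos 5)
Your proposal is correct and follows essentially the same route as the paper: the paper's proof simply asserts that $\red$ preserves the number of odd entries and that cyclic balanced compositions reduce to $(1|1)$, then concludes. Your write-up supplies the parity case analysis (on $D = a_k - b_m$, including the dropped-zero case) that the paper leaves implicit, but the underlying argument is identical.
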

\begin{proof}
    The operation $\red$, and therefore the algorithm $\rred$,
    preserves the number of odd entries in a composition.
    The cyclic balanced compositions all reduce to $(1|1)$,
    so they must themselves have exactly two odd entries.
\end{proof}

\section{The General Setting}\label{sec:unbalanced}

In this section,
we will generalize the results of the previous section to all compositions.

\subsection{Equalization}

The Repeated Reduction Algorithm, developed in the previous section,
determines whether a balanced composition is cyclic.
We now address the problem of determining whether any composition is cyclic.
We do this via \vocab{equalization},
an operation that transforms any composition into a balanced composition.
The Equalization Lemma, which is Lemma~\ref{lem:unbalanced-cycle-reduction},
reduces this new problem to the one addressed in the previous section.

In an unbalanced composition $C=(a_1,\ldots,a_k)$, no prefix sums to $\f n2$.
Therefore there is $i$ such that
\[
    a_1+\cdots+a_{i-1} < \f n2
    \quad
    \text{and}
    \quad
    a_1+\cdots+a_i > \f n2.
\]
We call this $i$ the \vocab{dividing index} of the composition.

For unbalanced compositions, we will develop notions of \vocab{nearly-equal division}
and \vocab{unequalness}.
\begin{defn}
    The \vocab{nearly-equal division} of an unbalanced composition $C$
    with dividing index $i$ is defined as follows.
    \begin{itemize}
        \item If $a_1+\cdots+a_{i-1} \le a_{i+1}+\cdots+a_k$, then
        the nearly equal division is $((a_1,\ldots,a_i),(a_{i+1},\ldots,a_k))$.
        \item If $a_1+\cdots+a_{i-1} > a_{i+1}+\cdots+a_k$, then
        the nearly equal division is $((a_1,\ldots,a_{i-1}),(a_i,\ldots,a_k))$.
    \end{itemize}
\end{defn}
\begin{ex}
    The nearly-equal division of $(1,1,3,1)$ is $((1,1),(3,1))$;
    the $3$ belongs to the second part of the division because $1+1>1$.
    The nearly-equal division of $(3,4,2,2)$ is $((3,4),(2,2))$;
    the $4$ belongs to the first part of the division because $3\le 2+2$.
\end{ex}
\begin{defn}
    The \vocab{unequalness} of an unbalanced composition $C$
    with nearly-equal division $((a_1,\ldots,a_j),(a_{j+1},\ldots,a_k))$ is
    \[
        U(C) = \lt|\sum_{\ell=1}^j a_\ell - \sum_{\ell=j+1}^k a_\ell\rt|.
    \]
\end{defn}
\begin{defn}
    The \vocab{equalization} of an unbalanced composition $C$
    with nearly-equal division $((a_1,\ldots,a_j),(a_{j+1},\ldots,a_k))$ is
    \[
        \eq(C) = (a_1,\ldots,a_j,U(C),a_{j+1},\ldots,a_k),
    \]
    The equalization of a balanced composition $C$ is $\eq(C) = C$.
\end{defn}
Note that $\eq(C)$ is always a balanced composition;
if $C$ is unbalanced, the term $U(C)$ gets added to the smaller side of the
nearly-equal division of $C$, thereby balancing it.
\begin{rem}
    The nearly-equal division is defined non-symmetrically; when
    \[
        a_1+\cdots+a_{i-1} = a_{i+1}+\cdots+a_k,
    \]
    we arbitrarily define the nearly-equal division to be
    $((a_1,\ldots,a_i),(a_{i+1},\ldots,a_k))$.
    However, the definition of equalization is still symmetric.
    When the above equality holds,
    regardless of whether we define the nearly-equal division $C$ as
    \[
        ((a_1,\ldots,a_i),(a_{i+1},\ldots,a_k))
    \]
    or
    \[
        ((a_1,\ldots,a_{i-1}),(a_i,\ldots,a_k)),
    \]
    the equalization is
    \[
        \eq(C)=(a_1,\ldots,a_i,a_i,\ldots,a_k).
    \]
\end{rem}

The following lemma allows us to reduce the question of
whether an unbalanced composition is cyclic
to the question of whether a balanced composition is cyclic.

\begin{lem}[Equalization Lemma]\label{lem:unbalanced-cycle-reduction}
The unbalanced composition $C$ is cyclic if and only if $\eq(C)$ is cyclic.
\end{lem}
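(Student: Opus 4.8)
The plan is to mirror the proof of the Cycle Reduction Lemma (Lemma~\ref{lem:cycle-reduction}), arguing through the cycle diagram and a pairing of loose ends. Since $\eq(C)=C$ when $C$ is balanced, I may assume $C$ is unbalanced, with dividing index $i$, nearly-equal division $((a_1,\ldots,a_i),(a_{i+1},\ldots,a_k))$, and unequalness $U=U(C)$; I treat this case and note that the other case of the nearly-equal division is symmetric under reflection across the line $y=x$. Writing $c=a_1+\cdots+a_{i-1}$ and $S=a_1+\cdots+a_i$, the dividing layer $L$ (the term $a_i$) is the layer closest to the center of the diagram. It lies entirely on one side of the line $y=x$, at diagonal offset $\delta=(n-S)-c=a_i-U$, but its columns and rows straddle the center lines $x=\frac n2$ and $y=\frac n2$. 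The goal is to show that $C$ and $\eq(C)$ induce the same outer wiring and the same pairing of loose ends, so that one diagram is a single loop exactly when the other is.

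First I would analyze the diagram of $C$ near the center. Classifying the diagonal positions met by the wires of $L$, I expect exactly $U$ central positions at which both the incident vertical and horizontal wires belong to $L$ itself --- these form the self-wiring of $L$ across the center --- flanked by $\delta$ lower-left positions (vertical wire from $L$, horizontal wire from an outer layer) and $\delta$ upper-right positions (horizontal wire from $L$, vertical wire from an outer layer). The latter two families are precisely the $2\delta$ loose ends, matching the count $a_k-b_m$ from Lemma~\ref{lem:cycle-reduction} with $a_k=a_i$ and $b_m=U$. Next I would turn to $\eq(C)=(a_1,\ldots,a_i \mid U,a_{i+1},\ldots,a_k)$, which is balanced with innermost layers $L$ (size $a_i$) and a new layer of size $U$ directly opposite it. Removing these two innermost layers from $\eq(C)$ leaves exactly the layers of $C$ other than $L$, so the two diagrams share a common outer wiring whose loose ends correspond in their natural left-to-right order.

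The crux is to verify that the inner wiring pairs the loose ends the same way in both diagrams. In each case I would follow the loop through the inner structure and track its position along the diagonal. In $C$, passing up through a column of $L$ and across its horizontal wire advances the diagonal position by $\delta$, and this repeats while the position remains among the columns of $L$, reflecting at the central positions until the wire exits at a loose end. In $\eq(C)$, one full winding --- through one point of $L$ and one point of the opposing layer of size $U$ --- advances the position by $a_i-U=\delta$ as well, exactly as in the proof of Lemma~\ref{lem:cycle-reduction}. Since both inner wirings shift the diagonal position by the same step $\delta$, each pairs a lower-left loose end with the upper-right loose end in the same residue class modulo $\delta$; a short computation confirms that these two pairings agree under the identification of outer loose ends above. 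With the outer wiring and the inner loose-end pairing both matched, the argument of Lemma~\ref{lem:cycle-reduction} shows that the diagram of $C$ is a single loop if and only if that of $\eq(C)$ is, which is the claim.

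The main obstacle is the center analysis in the middle paragraph: in contrast to Lemma~\ref{lem:cycle-reduction}, where the two innermost layers sit symmetrically across the diagonal with an empty center, here a single layer reflects off the diagonal, and I must show this reflection reproduces exactly the loose-end pairing of an opposing layer of size $U$. I would also dispatch the degenerate case $\delta=0$ separately: then $L$ lies on the line $y=x$ and consists of fixed points, so $C$ is not cyclic, while $U=a_i$ makes the two innermost layers of $\eq(C)$ equal, so $\eq(C)$ is not cyclic by Lemma~\ref{lem:prep-cycle-reduction}; thus both compositions are non-cyclic and the equivalence holds trivially.
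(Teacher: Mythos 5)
Your argument is essentially the paper's: analyze the central layer $L_c$ in the cycle diagram of $C$, observe that its points sit $a_i-U$ cells off the diagonal so that points diagonally $a_i-U$ apart are wire-adjacent, leaving $a_i-U$ pairs of loose ends, and then check that inserting the opposing layer of size $U$ in $\eq(C)$ reproduces exactly this pairing (each point of the new layer is wire-adjacent to two points of $L_c$ that are diagonally $a_i-U$ apart), so that for any fixed outer wiring the two diagrams are simultaneously single loops. The main body of your proof is correct and matches the paper's.

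The one flaw is your handling of the degenerate case $\delta=0$, i.e.\ $a_1+\cdots+a_{i-1}=a_{i+1}+\cdots+a_k$. You assert that $L$ consists of fixed points, hence $C$ is not cyclic, and that $\eq(C)$ is not cyclic ``by Lemma~\ref{lem:prep-cycle-reduction}.'' Both claims fail for $C=(1)$: the permutation $1$ is a single $1$-cycle, so $C$ \emph{is} cyclic, and $\eq(C)=(1|1)$ corresponds to $21$, a single $2$-cycle, which is also cyclic --- indeed Lemma~\ref{lem:prep-cycle-reduction} does not say that compositions with $a_k=b_m$ are non-cyclic; it says the only cyclic one is $(1|1)$. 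The biconditional still holds in this corner (both are cyclic), so the lemma survives, but your stated justification is wrong there; the paper instead notes that the middle $a_i$ entries of $C$ are fixed points and the middle $2a_i$ entries of $\eq(C)$ form $a_i$ two-cycles, so $C$ and $\eq(C)$ are simultaneously cyclic exactly when $C=(1)$ and simultaneously non-cyclic otherwise. You should restate the degenerate case accordingly.
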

Once again, we first demonstrate the lemma with an example.

\begin{ex}
    Suppose $a_1+\cdots+a_{i-1} + 3 = a_{i+1}+\cdots+a_k$, and $a_i=5$.
    Then the nearly-equal division of $C$ is
    \[
        ((a_1,\ldots,a_{i-1},5),(a_{i+1},\ldots,a_k)),
    \]
    and
    \[
        U(C) = \lt| \sum_{\ell=1}^i a_\ell - \sum_{\ell=i+1}^k a_\ell \rt| = 2.
    \]
    Thus,
    \[
        \eq(C) = (a_1,\ldots,a_{i-1},5 | 2,a_{i+1},\ldots,a_k).
    \]
    The left and right diagrams of Figure~\ref{fig:equalization-example} show, respectively,
    the central layer of the cycle diagram of $C$ and the two innermost layers of the cycle
    diagram of $\eq(C)$.
    Note that equalization slides the layer of size $5$ upward and to the left,
    and inserts a layer of size $2$ opposite it,
    while keeping the connectivity of the remaining wires unchanged.

    \begin{figure}
        \includegraphics{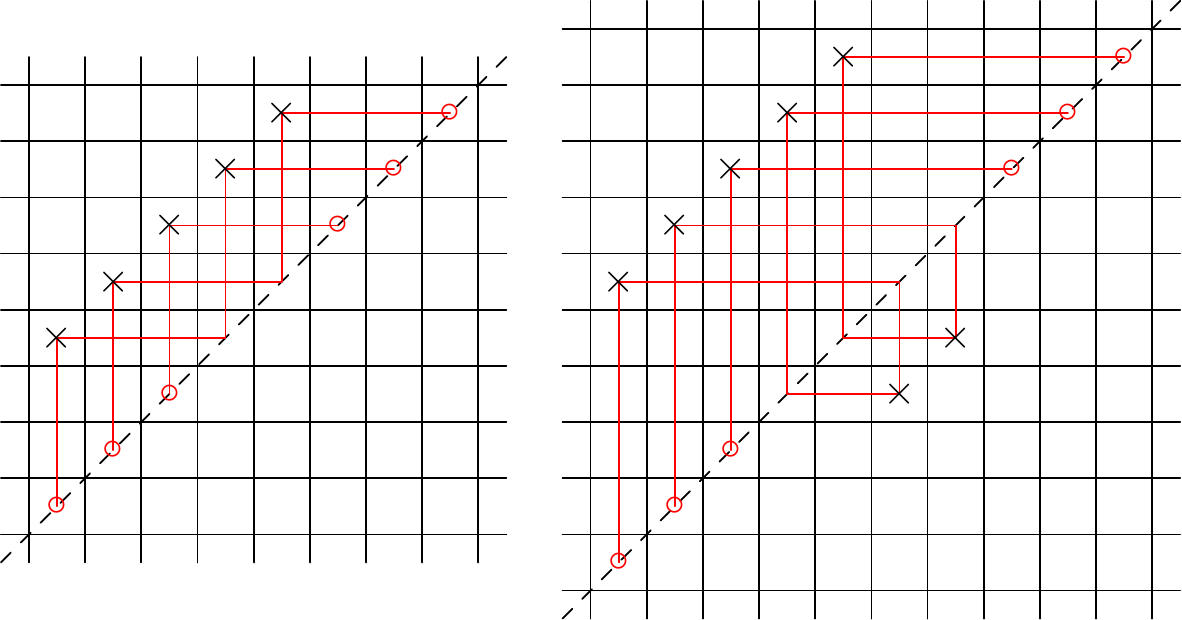}
        \caption{
            Cycle diagram for a central layer of size 5 in a composition with unequalness 2 (left),
            and for a layer of size 5 opposite a layer of size 2 (right).
            The main diagonal is shown by a dotted line, and loose ends are circled.
        }
        \label{fig:equalization-example}
    \end{figure}
    In both diagrams, there are three pairs of loose ends;
    the loose ends are paired by connectivity in the same way.
    Therefore, $C$ is cyclic if and only if $\eq(C)$ is cyclic.
\end{ex}

\begin{proof}[Proof of Lemma~\ref{lem:unbalanced-cycle-reduction}]
    Assume $C$ is unbalanced.
    Let $C=(a_1,\ldots,a_k)$ have dividing index $i$.
    If
    \[
        a_1+\cdots+a_{i-1} = a_{i+1}+\cdots+a_k,
    \]
    then
    \[
        \eq(C) = (a_1,\ldots,a_i,a_i,\ldots,a_k).
    \]
    So, the middle $a_i$ entries of the permutation associated to $C$ are fixed points,
    and the middle $2a_i$ entries of the permutation associated to $\eq(C)$
    form $a_i$ $2$-cycles.
    Therefore if $C=(1)$, $C$ and $\eq(C)$ are simultaneously cyclic,
    and otherwise they are simultaneously not cyclic.

    Otherwise, we further assume
    \[
        a_1+\cdots+a_{i-1} \neq a_{i+1}+\cdots+a_k.
    \]
    The remaining proof is, once again, a matter of following loose ends.
    Let us assume, without loss of generality, that
    \[
        a_1+\cdots+a_{i-1} < a_{i+1}+\cdots+a_k.
    \]
    Then the nearly-equal division of $C$ is
    $((a_1,\ldots,a_i),(a_{i+1},\ldots,a_k))$.
    Let $U=U(C)$.  Note that $U < a_i$, because otherwise
    \[
        \sum_{\ell=1}^i a_\ell < \sum_{\ell=i+1}^k a_\ell,
    \]
    contradicting that $i$ is the dividing index.

    Let us call the layer corresponding to $a_i$ in the cycle diagram of $C$
    the \vocab{central layer} of the cycle diagram; we denote this layer $L_c$.
    Note that points not in $L_c$ are adjacent, along the wire,
    to two points on the opposite side of the line $y=x$.
    This property may only fail to hold for points in $L_c$.

    Since
    \[
        \sum_{\ell=i+1}^k a_\ell - \sum_{\ell=1}^{i-1} a_\ell
        = a_i - U,
    \]
    all points in $L_c$ are $a_i-U$ cells above the main diagonal.
    Therefore, points in $L_c$ that are diagonally $a_i-U$ apart
    are adjacent along the wire.
    Moreover, these connections leave $a_i-U$ pairs of loose ends:
    $a_i-U$ rightward-pointing loose ends
    incident to the upper-rightmost $a_i-U$ points in $L_c$,
    and equally many downward-pointing loose ends
    incident to the lower-leftmost $a_i-U$ points in $L_c$.

    In $\eq(C)$, an additional layer $L_u$ of size $U$ is added
    just below and to the right of $L_c$,
    resulting in a balanced composition where the two innermost layers
    have size $a_i$ and $U$.
    Each point in $L_u$ is adjacent along the wire to two points in $L_c$;
    it is easy to see these points are diagonally $a_i-U$ apart.

    Thus, if in the cycle diagram of $C$,
    two points in $L_c$ are adjacent along the wire,
    in the cycle diagram of $\eq(C)$,
    they are two apart along the wire, separated by a point in $L_u$.
    It follows that the loose ends in $L_c$ are paired the same way
    in the cycle diagrams of $C$ and $\eq(C)$.
    So, the cycle diagram of $C$ is a single cycle if and only if
    the cycle diagram of $\eq(C)$ is a single cycle.
\end{proof}

\subsection{Completion of the Proof}

Lemma~\ref{lem:unbalanced-cycle-reduction}, in conjunction with
Lemma~\ref{lem:cycle-reduction},
immediately implies the validity of Algorithm~\ref{alg:main}.

We will now prove Theorem~\ref{thm:main}, restated below for clarity.
\begin{repthm}{thm:main}
    For all positive integers $n$, $C_n(132,213) \le n^2 \cdot 2^{n/2}$.
\end{repthm}

We first note the following structural result.
\begin{prop}\label{prop:main-unbalanced}
    All cyclic compositions of $n$ have exactly one odd term if $n$ is odd
    and two odd terms if $n$ is even.
\end{prop}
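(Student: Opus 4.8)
The plan is to deduce this proposition from the balanced case, Proposition~\ref{prop:main-balanced}, together with the Equalization Lemma, Lemma~\ref{lem:unbalanced-cycle-reduction}; the only genuine work is a parity count of the single term that equalization inserts. As an orienting observation, note first that for \emph{any} composition of $n$ the number of odd terms has the same parity as $n$, since the terms sum to $n$. This already shows the two outcomes in the statement are the only ones of the correct parity, so the task is to pin the count down exactly, not merely modulo $2$.

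I would then split on whether $C$ is balanced. If $n$ is even and $C$ is balanced, then since $C$ is cyclic, Proposition~\ref{prop:main-balanced} applies directly and gives that $C$ has exactly two odd terms. Otherwise $C$ is unbalanced; this case in particular covers \emph{every} composition of odd $n$, since compositions of odd $n$ are all unbalanced. For unbalanced $C$, the Equalization Lemma says $C$ is cyclic if and only if $\eq(C)$ is cyclic, and $\eq(C)$ is balanced by construction, so Proposition~\ref{prop:main-balanced} applies to $\eq(C)$ and tells us $\eq(C)$ has exactly two odd terms.

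It remains to compare the odd-term counts of $C$ and $\eq(C)$. By definition $\eq(C)$ is obtained from $C$ by inserting exactly one extra term, $U(C)$, so its number of odd terms exceeds that of $C$ by precisely $1$ if $U(C)$ is odd and by $0$ otherwise. Writing the nearly-equal division of $C$ as $((a_1,\ldots,a_j),(a_{j+1},\ldots,a_k))$ and setting $P=a_1+\cdots+a_j$ and $Q=a_{j+1}+\cdots+a_k$, we have $P+Q=n$ and $U(C)=|P-Q|$, whence $U(C)\equiv P-Q\equiv P+Q=n\pmod 2$; that is, $U(C)$ is odd exactly when $n$ is odd. Hence, letting $t$ be the number of odd terms of $C$, the balanced composition $\eq(C)$ has $t$ odd terms when $n$ is even and $t+1$ odd terms when $n$ is odd, and this count equals $2$. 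Therefore $t=2$ for even $n$ and $t=1$ for odd $n$, as claimed.

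The argument is short, and there is no deep obstacle, since the heavy lifting is already carried by Proposition~\ref{prop:main-balanced} and Lemma~\ref{lem:unbalanced-cycle-reduction}. The one point requiring care is the parity identity $U(C)\equiv n\pmod 2$ together with the fact that equalization changes the odd-term count by \emph{exactly} this indicator; this is what fixes the count precisely rather than only up to parity. I would also sanity-check the degenerate unbalanced case in which the two sides of the nearly-equal division are equal, so that $U(C)=a_i$ and $\eq(C)=(a_1,\ldots,a_i,a_i,\ldots,a_k)$: the same identity $U(C)\equiv n\pmod 2$ holds there, so no separate treatment is needed.
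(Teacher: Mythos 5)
Your proposal is correct and follows essentially the same route as the paper: apply Proposition~\ref{prop:main-balanced} to the balanced composition $\eq(C)$ (cyclic by the Equalization Lemma) and observe that the inserted term $U(C)=|P-Q|$ has the same parity as $P+Q=n$, so the odd-term count of $C$ is $2$ minus the indicator that $n$ is odd. Your write-up just makes explicit a few steps the paper leaves implicit, such as the balanced case and the invocation of Lemma~\ref{lem:unbalanced-cycle-reduction}.
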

\begin{proof}
    Suppose $C$ is a cyclic composition of $n$.
    If $n$ is odd, the unequalness $U(C)$ is odd;
    if $n$ is even, the unequalness $U(C)$ is even.
    By Proposition~\ref{prop:main-balanced}, $\eq(C)$ has exactly two odd terms.
    Therefore, $C$ has one odd term if $n$ is odd, and two odd terms if $n$ is even.
\end{proof}

\begin{proof}[Proof of Theorem~\ref{thm:main}]
    Suppose $n$ is odd.
    By Proposition~\ref{prop:main-unbalanced},
    cyclic compositions of $n$ have exactly one odd term.
    We can obtain any such composition by decrementing one term
    of a composition of $n+1$ with only even terms.
    There are $2^{(n-1)/2}$ compositions of $n+1$ with only even terms.
    Since each has at most $\f{n+1}{2}$ terms,
    the number of cyclic compositions of $n$ is bounded above by
    \[
        \f{n+1}{2} \cdot 2^{(n-1)/2} \le n^2 \cdot 2^{n/2}.
    \]

    Otherwise, suppose $n$ is even.
    By Proposition~\ref{prop:main-unbalanced},
    cyclic compositions of $n$ have exactly two odd terms.
    We can obtain any such composition by decrementing two terms
    of a composition of $n+2$ with only even terms.
    There are $2^{n/2}$ compositions of $n+2$ with only even terms.
    Since each has at most $\f n2 + 1$ terms,
    we can decrement two terms in at most $\binom{n/2 + 1}{2}$ ways.
    So, the number of cyclic compositions of $n$ is bounded by
    \[
        \binom{n/2 + 1}{2} \cdot 2^{n/2} \le n^2 \cdot 2^{n/2}
    \]
\end{proof}
\begin{rem}
    The proof of Theorem~\ref{thm:main} implies
    an upper bound of $n^2 \cdot 2^{(n-1)/2}$,
    and this bound can be further refined by a constant factor.
    For odd $n$, the proof achieves a tighter bound by a factor of $n$.
    Since, as we discuss in the next section,
    we do not believe our bound's exponential term is tight,
    we are content to drop these factors.
\end{rem}

\section{Future Directions}\label{sec:conclusion}

This paper makes progress towards Problem~\ref{prob:main},
the enumeration of the $(132,213)$-avoiding cyclic permutations.
This problem is still open.

Leveraging Algorithm~\ref{alg:main},
computer experiments done by the author have computed $C_n(132,213)$
for $n$ up to $75$.\footnote{
    Implementation detail:
    this data was computed by a dynamic programming algorithm,
    where the subproblems were to count the number of
    balanced cyclic compositions $(a_1,\ldots,a_k | b_m,\ldots,b_1)$ of $n$
    where the sequence $a_1,\ldots,a_k$ has a specified suffix.
    The runtime of this algorithm is exponential,
    but grows slowly enough that data collection for $n$ up to $75$ is possible.
}
This data is shown in Table~\ref{table:cn-data}.
\begin{table}[H]
    \centering
    \scriptsize
    \begin{tabular}{|c|c||c|c||c|c||c|c||c|c||}
        \hline
        $n$ & $C_n(132,213)$ & $n$ & $C_n(132,213)$ & $n$ & $C_n(132,213)$ & $n$ & $C_n(132,213)$ & $n$ & $C_n(132,213)$ \\ \hline\hline
         1 &   1 & 16 &   762 & 31 &   27892 & 46 &   8501562 & 61 &   129285010 \\ \hline
         2 &   1 & 17 &   440 & 32 &  138200 & 47 &   2570744 & 62 &   803955498 \\ \hline
         3 &   2 & 18 &  1548 & 33 &   49276 & 48 &  15140024 & 63 &   226271426 \\ \hline
         4 &   4 & 19 &   818 & 34 &  252032 & 49 &   4498100 & 64 &  1413400762 \\ \hline
         5 &   6 & 20 &  3060 & 35 &   87276 & 50 &  26777982 & 65 &   395525678 \\ \hline
         6 &  12 & 21 &  1490 & 36 &  459102 & 51 &   7886792 & 66 &  2478240778 \\ \hline
         7 &  14 & 22 &  5960 & 37 &  153586 & 52 &  47470826 & 67 &   692053810 \\ \hline
         8 &  32 & 23 &  2720 & 38 &  827884 & 53 &  13792064 & 68 &  4350163074 \\ \hline
         9 &  30 & 24 & 11404 & 39 &  270876 & 54 &  83680928 & 69 &  1209749736 \\ \hline
        10 &  76 & 25 &  4894 & 40 & 1494032 & 55 &  24162342 & 70 &  7621011834 \\ \hline
        11 &  62 & 26 & 21596 & 41 &  475282 & 56 & 147821872 & 71 &  2116321814 \\ \hline
        12 & 170 & 27 &  8790 & 42 & 2671066 & 57 &  42241704 & 72 & 13362224638 \\ \hline
        13 & 122 & 28 & 40446 & 43 &  835998 & 58 & 259952664 & 73 &  3699626596 \\ \hline
        14 & 370 & 29 & 15654 & 44 & 4784840 & 59 &  73959542 & 74 & 23395287534 \\ \hline
        15 & 232 & 30 & 74906 & 45 & 1464206 & 60 & 457955944 & 75 &  6471271704 \\ \hline
    \end{tabular}
    \caption{Values for $C_n(132,213)$ for $n$ up to $75$.}
    \label{table:cn-data}
\end{table}
Some observations are apparent from this data.
First, the values of $C_n(132,213)$
show different behavior for even and odd $n$.
For $n\ge 8$,
the values for even $n$ are larger than the values for adjacent odd $n$.
This is expected,
because cyclic compositions with odd sum have exactly one odd term,
whereas cyclic compositions with even sum have exactly two odd terms;
the former condition is more restrictive.

Second, the growth of $C_n(132,213)$, for both even and odd $n$,
appears to be asymptotically slower than $2^{n/2}$.
From fitting the data, we get an asymptotic estimate of
\[
    C_n(132,213) = 2^{\alpha n + o(n)},
\]
where $\alpha \approx 0.38$.
We believe this $\alpha$ is the same for even and odd $n$;
this is because the discrepancy $\frac{C_{n}(132,213)}{C_{n-1}(132,213)}$
for even $n$ appears, empirically, to be subexponential (and in fact, sublinear).

Thus, we do not believe the upper bound in Theorem~\ref{thm:main}
is asymptotically tight.
Of course, this prompts the following problem.
\begin{prob}
    What is the correct value of $\alpha$ in the above asymptotic?
\end{prob}
Theorem~\ref{thm:main} implies $\alpha \le 0.5$.
Both an improvement of this bound and a nontrivial lower bound
would be interesting results.

One approach for future research is to study the number-theoretic properties
of Algorithm~\ref{alg:main},
to obtain results in the spirit of Proposition~\ref{prop:main-unbalanced}.
If one can determine more number-theoretic structure of cyclic compositions,
it may be possible to refine the upper bound in Theorem~\ref{thm:main}.

Another more algebraic approach is to obtain
recursive identities or inequalities for values of $C_n(132,213)$.
This approach involves breaking $C_n(132,213)$ into subproblems,
perhaps by suffixes of the sequence $a_1,\ldots,a_k$
in the balanced composition $\eq(C) = (a_1,\ldots,a_k|b_m,\ldots,b_1)$,
and finding injective or bijective mappings among the subproblems.
It may be possible to obtain a lower bound for $C_n(132,213)$ in this manner.

Because the first step of Algorithm~\ref{alg:main} reduces all compositions to a
balanced composition, it would be of independent interest to
enumerate the balanced cyclic compositions of $n$.
These correspond to the balanced reverse layered permutations of length $n$.
Let $C^B_n(132,213)$ be the number of such compositions and permutations.
For even $n$ up to $74$,
computer experiments give the values of $C^B_n(132,213)$
in Table~\ref{table:cbn-data}.
\begin{table}[H]
    \centering
    \scriptsize
    \begin{tabular}{|c|c||c|c||c|c||c|c||c|c||}
        \hline
        $n$ & $C^B_n(132,213)$ & $n$ & $C^B_n(132,213)$ & $n$ & $C^B_n(132,213)$ & $n$ & $C^B_n(132,213)$ & $n$ & $C^B_n(132,213)$ \\ \hline\hline
         2 &   1 & 18 &   586 & 34 &   86572 & 50 &   8948694 & 66 &  821844316 \\ \hline
         4 &   2 & 20 &  1140 & 36 &  158146 & 52 &  15884762 & 68 & 1442300988 \\ \hline
         6 &   6 & 22 &  2182 & 38 &  281410 & 54 &  27882762 & 70 & 2525295380 \\ \hline
         8 &  14 & 24 &  4130 & 40 &  509442 & 56 &  49291952 & 72 & 4426185044 \\ \hline
        10 &  34 & 26 &  7678 & 42 &  901014 & 58 &  86435358 & 74 & 7747801190 \\ \hline
        12 &  68 & 28 & 14368 & 44 & 1618544 & 60 & 152316976 &    & \\ \hline
        14 & 150 & 30 & 26068 & 46 & 2852464 & 62 & 266907560 &    & \\ \hline
        16 & 296 & 32 & 48248 & 48 & 5089580 & 64 & 469232204 &    & \\ \hline
    \end{tabular}
    \caption{Values for $C^B_n(132,213)$ for even $n$ up to $74$.}
    \label{table:cbn-data}
\end{table}

The data suggests the following conjecture.
\begin{conj}
    For even $n$,
    \[\f{C^B_n(132,213)}{C_n(132,213)} = \Omega(1).\]
\end{conj}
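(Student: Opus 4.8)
The plan is to compare the two counts through their generating functions, using \emph{equalization} to relate the total count $C_n(132,213)$ to the balanced count $C^B_n(132,213)$, and an automaton built from the reduction algorithm $\rred$ to pin down their common growth rate. First I would organize cyclic compositions by unequalness. Writing $E_{n,U}$ for the number of cyclic compositions of $n$ with unequalness $U$, the Equalization Lemma (Lemma~\ref{lem:unbalanced-cycle-reduction}) together with the injectivity of $\eq$ on unbalanced compositions (a composition is recovered from $\eq(C)$ by deleting the term adjacent to the balance bar) identifies $E_{n,U}$, for even $U\ge 2$, with a distinguished subfamily of the balanced cyclic compositions of $n+U$, while $E_{n,0}=C^B_n(132,213)$. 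Thus
\[
    C_n(132,213)=\sum_{\substack{U\ge 0\\ U\ \mathrm{even}}} E_{n,U},
\]
and the conjecture is equivalent to the statement that the $U=0$ term is a constant fraction of this sum for even $n$.

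Second, I would encode balanced cyclic compositions as accepted paths of an automaton that reverses $\rred$. Reading a balanced composition from the outside in, the reduction acts only on the innermost opposing pair of layers, replacing them by a shorter consecutive pair via a subtractive Euclidean step; inverting this, every balanced cyclic composition is built from $(1|1)$ by steps that replace an inner consecutive pair $(u,v)$ on one side by an opposing pair of sizes $(qD+u,\,(q-1)D+u)$ with $D=u+v$ and $q\ge 1$. The $q$-fold branching is unbounded but contributes a geometric factor in the size variable, so I expect the generating function $F(x)=\sum_m C^B_m(132,213)\,x^m$ to be rational, or at worst algebraic, with a dominant singularity $x_0=2^{-\alpha}$ governing $C^B_m(132,213)=\Theta(\lambda^m)$ along even $m$, where $\lambda=1/x_0$. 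The same automaton, tagged by the inserted central layer, produces the generating function of the full sum above.

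Finally, I would compare dominant singularities. Since inserting a central layer of size $U$ only prepends a bounded local structure to an otherwise balanced configuration, I expect the generating function of $C_n(132,213)$ to have its dominant singularity at the same $x_0$ and of the same polynomial order as $F$, so that both $C_n(132,213)$ and $C^B_n(132,213)$ are $\Theta(\lambda^n)$ with positive constants along even $n$; the ratio then tends to a positive limit, which is $\Omega(1)$ as claimed.

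The main obstacle is the analytic-combinatorial core of the last two steps: establishing that the reduction automaton, despite its unbounded Euclidean branching, has a tractable generating function, and then proving that the dominant singularity of the total count coincides with that of the balanced count in both location and order, so that the $U=0$ term is not asymptotically swamped by the tail $\sum_{U\ge 2}E_{n,U}$. This is precisely the regime the present paper does not control, as our bound only locates the growth rate below $2^{n/2}$, so a genuinely new handle on the Euclidean dynamics of $\rred$ is what the proof ultimately requires. A size-preserving injection from unbalanced to balanced cyclic compositions would be an attractive alternative, but it is delicate: the natural surgery that splits the central layer to force a balance point matches the \emph{number} of loose ends yet not, in general, their pairing, so cyclicity is not automatically preserved.
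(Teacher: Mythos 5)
This statement is a \emph{conjecture} in the paper, supported only by the numerical data in Table~\ref{table:cbn-data}; the paper offers no proof, and your proposal does not supply one either --- it is a research program whose decisive steps you yourself flag as open. Two of those steps are genuine gaps rather than routine verifications. First, the ``automaton'' obtained by inverting $\rred$ does not have a finite state space: the inverse reduction step acts on an inner consecutive pair $(u,v)$ whose values range over all pairs of positive integers, and the branching parameter $q$ is unbounded, so there is no a priori reason that $F(x)=\sum_m C^B_m(132,213)\,x^m$ is rational or even algebraic, nor that $C^B_m(132,213)=\Theta(\lambda^m)$ with a genuine nonoscillating positive constant along even $m$. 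The paper's own fit $C_n(132,213)=2^{\alpha n+o(n)}$ leaves room for subexponential corrections that would defeat a naive singularity comparison.

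Second, even granting sharp asymptotics for the balanced count, your decomposition $C_n(132,213)=\sum_{U}E_{n,U}$ compares the $U=0$ term against a tail of up to order $n$ terms, where $E_{n,U}$ embeds via $\eq$ into the balanced cyclic compositions of the \emph{larger} integer $n+U$. If $C^B_m(132,213)\asymp\lambda^m$, the trivial bound on the tail is $\sum_{U}\lambda^{\,n+U}$, which is exponentially larger than the $U=0$ term; so the conjecture requires showing that the density, within the balanced cyclic compositions of $n+U$, of those whose inserted central layer has size exactly $U$ and whose deletion yields an unbalanced cyclic composition decays fast enough in $U$ to beat the factor $\lambda^{U}$. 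Nothing in the reduction machinery of the paper controls this density, and that is exactly where the difficulty lives; your closing observation that the natural splitting surgery matches the number of loose ends but not their pairing correctly identifies why the easier injective route also fails. As written, the proposal should be read as a plausible plan of attack on an open problem, not as a proof.
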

That is, the proportion of $(132,213)$-avoiding cyclic permutations
that are balanced is bounded below by a constant,
which is empirically about $0.33$.

\subsection*{Acknowledgements}

This research was completed in
the 2018 Duluth Research Experience for Undergraduates (REU) program,
and was funded by NSF/DMS grant 1650947 and NSA grant H98230-18-1-0010.
The author gratefully acknowledges Joe Gallian suggesting the problem
and supervising the research.
The author thanks Colin Defant and Levent Alpoge for useful discussions
over the course of this work, and
Joe Gallian and Danielle Wang for comments on early drafts of this paper.
The author thanks the anonymous reviewers for useful comments and suggestions.

\end{document}